\titleformat{\section}[runin]{\bfseries\filcenter}{\thesection}{1em}{}
\renewcommand{\thesection}{\arabic{section}}
\title{\large \bf Isomorphism between automorphism groups of finitely generated groups}
\author{\small \bf Sandeep Singh\footnote{Supported by Council of Scientific and Industrial Research.} and Deepak Gumber\footnote{Supported by National Board for Higher Mathematics, Department of Atomic Energy.}\\
\small \em School of Mathematics and Computer Applications\\
\small \em Thapar University, Patiala - 147 004,
India\\
}
\date{}
\DeclareMathOperator{\Inn}{Inn}
\DeclareMathOperator{\Aut}{Aut}
\DeclareMathOperator{\Hom}{Hom}
\newtheorem{thm}{Theorem}[section]
\newtheorem{lm}[thm]{Lemma}
\newtheorem{cor}[thm]{Corollary}
\newtheorem{rem}[thm]{Remark}
\begin{document}

\maketitle
\begin{abstract}
\noindent {\bf Abstract.} Let $G$ be a finitely generated group and let $C^*$ denote the group of all central automorphisms of $G$ fixing the center of $G$ elementwise. Azhdari and Malayeri [J. Algebra Appl., {\bf 6}(2011), 1283-1290] gave necessary and sufficient conditions on $G$ such that $C^* \simeq \Inn(G)$. We prove a technical lemma and, as a consequence, obtain a short and easy proof of this result of Azhdari and Malayeri. Subsequently, we also obtain short proofs of some other existing and some new related results. 
\end{abstract}

\vspace{2ex}

\noindent {\bf 2010 Mathematics Subject Classification:}
20F28, 20F18.

\vspace{2ex}

\noindent {\bf Keywords:} Central-automorphism, nilpotent group.

\section{Introduction.}

Let $G$ be a finitely generated group and let $\Inn(G)$ denote the inner automorphism group of $G$. For normal subgroups $X$ and $Y$ of $G$, let $\Aut^X(G)$ and $\Aut_Y(G)$ denote the subgroups of $\Aut(G)$ centralizing $G/X$ and $Y$ respectively. We denote the intersection $\Aut^X(G)\cap\Aut_Y(G)$ by $\Aut_Y^X(G)$. Let $C^*$, in particular, denote the group $\Aut_{Z(G)}^{Z(G)}(G)$, where $Z(G)$ is the center of $G$. For a finite group $G$, let $G_p$ and $\pi(G)$ respectively denote the Sylow $p$-subgroup and the set of prime divisors of $G$. For a finite $p$-group $G$,  Attar \cite[Main Theorem]{att}  proved that $C^* = \Inn(G)$ if and only if either $G$ is abelian or $G$ is nilpotent of class 2 and $Z(G)$ is cyclic. Azhdari and Malayeri \cite[Theorem 0.1]{azh2011} (see also \cite[Theorem 2.3]{azh2013a} for correct version) generalized this result of Attar and proved that if $G$ is a finitely generated nilpotent group of class 2, then $C^*\simeq \Inn(G)$ if and only if $Z(G)$ is infinite cyclic or  $Z(G)\simeq C_{m}\times H\times \mathbb{Z}^{r}$, where $C_m\simeq \prod_{p\in\pi(G/Z(G))}Z(G)_p$, $H\simeq \prod_{p\notin\pi(G/Z(G))}Z(G)_p$, $r\geq 0$ is the torsion-free rank of $Z(G)$ and $G/Z(G)$ is of finite exponent dividing $m$. We prove a technical lemma, Lemma 2.1, and as a consequence give a short and easy proof of this main theorem of Azhdari and Malayeri. We also obtain short and alternate proofs of Corollary 2.1 of \cite{azh2013a},  and Propostion 1.11 and Theorem 2.2(i) of \cite{azh}.  Some other related results for finitely generated and finite $p$-groups are also obtained. 

  By $C_{p}$ we denote a cyclic group of order $p$ and by $X^n$ we denote the direct product of $n$-copies of a group $X$. By $\Hom(G,A)$ we denote the group of all homomorphisms of $G$ into an abelian group $A$. The rank of  $G$ is the smallest cardinality of a generating set of $G$. The torsion rank and torsion-free rank of $G$ are respectively denoted as $d(G)$ and $\rho(G)$. By  $\exp(G)$ we denote the exponent of torsion part of $G$. All other unexplained notations, if any, are standard. The  following well known results will be used very  frequently without further referring.

\begin{lm}
Let $U,V$ and $W$ be abelian groups. Then\\
$(i)$ if $U$ is torsion-free of rank $m$, then $\Hom(U, V)\simeq V^{m}$, and \\ 
$(ii)$ if $U$ is torsion and $V$ is torsion-free, then 
$\Hom(U, V)=1$.
\end{lm}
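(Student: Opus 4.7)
The plan is to dispatch each part by standard reductions. For part $(i)$, since we are working in the finitely generated setting (cf.\ the introduction), a torsion-free abelian group of rank $m$ is free abelian, so $U\simeq\mathbb{Z}^m$. I would then combine two facts: that $\Hom$ turns (finite) direct sums in the first argument into direct products, i.e.\ $\Hom(\bigoplus_{i=1}^{m}\mathbb{Z},V)\simeq\prod_{i=1}^{m}\Hom(\mathbb{Z},V)$, and that $\Hom(\mathbb{Z},V)\simeq V$ via the evaluation map $\phi\mapsto\phi(1)$. Chaining these together yields $\Hom(U,V)\simeq V^{m}$.

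For part $(ii)$, the key observation is purely element-wise: any $u\in U$ has finite order, say $n_{u}$, and then for any $\phi\in\Hom(U,V)$ one has $n_{u}\phi(u)=\phi(n_{u}u)=\phi(0)=0$ in $V$. Since $V$ is torsion-free, this forces $\phi(u)=0$, and since $u$ was arbitrary, $\phi$ is the zero homomorphism. Hence $\Hom(U,V)=1$.

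Neither step presents a real obstacle; the only subtle point is interpreting ``torsion-free of rank $m$'' consistently with the finitely generated framework of the paper (so that $U$ really is $\mathbb{Z}^m$ rather than an exotic rank-$m$ torsion-free group like $\mathbb{Q}$). Once this is fixed, both statements are immediate from the universal property of $\mathbb{Z}$ and the order-killing argument, respectively, and the lemma can be stated in a couple of short lines.
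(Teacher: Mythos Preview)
The paper does not actually prove this lemma: it is stated as one of the ``well known results [that] will be used very frequently without further referring,'' with no argument supplied. Your proof is correct and is precisely the standard justification one would give; your reading of ``torsion-free of rank $m$'' as $\mathbb{Z}^m$ is consistent with the paper's declared finitely generated setting and its definition of rank as the minimal size of a generating set.
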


\section{Main results.}

Let $G$ be a finitely generated group and $M$ be an abelian subgroup of $G$ with $\pi(M)=\{q_1,q_2,\ldots ,q_e\}$. Let $L$ and $N$ be normal subgroups of $G$ such that $G^\prime\le N\le L$ and $\pi(G/L)=\pi(G/N)=\{p_1,p_2,\ldots ,p_d\}$. 
Let $X, Y, Z$  be respective torsion parts and $a,b,c$ be respective torsion-free ranks of $G/L,G/N$ and $M$. Let $X_{p_{i}}\simeq \prod_{j=1}^{l_{i}}C_{p_{i}^{\alpha_{ij}}}$, $Y_{p_{i}}\simeq \prod_{j=1}^{n_{i}}C_{p_{i}^{\beta_{ij}}} \;\; \mathrm{and} \;\;Z_{q_{i}}\simeq \prod_{j=1}^{m_{i}} C_{q_{i}^{\gamma_{ij}}}$, where for each $i$, $ \alpha_{ij}\geq \alpha_{i(j+1)},\; \beta_{ij}\geq \beta_{i(j+1)}$ and $ \gamma_{ij}\geq \gamma_{i(j+1)}$ are positive integers, respectively denote the Sylow subgroups of $X$, $Y$ and $Z$. Then 
$$G/L\simeq X\times \mathbb{Z}^a\simeq\prod_{i=1}^d X_{p_i}\times\mathbb{Z}^a\simeq\prod_{i=1}^d\;\;
\prod_{j=1}^{l_{i}}C_{p_{i}^{\alpha_{ij}}}\times \mathbb{Z}^a,$$ 
$$G/N\simeq Y\times \mathbb{Z}^b\simeq \prod_{i=1}^d Y_{p_i}\times \mathbb{Z}^b\simeq\prod_{i=1}^d\;\;
\prod_{j=1}^{n_{i}}C_{p_{i}^{\beta_{ij}}}\times \mathbb{Z}^b$$ and 
$$M\simeq Z\times \mathbb{Z}^c\simeq \prod_{i=1}^e Z_{q_i}\times \mathbb{Z}^c\simeq\prod_{i=1}^e\;\;\prod_{j=1}^{m_{i}} C_{q_{i}^{\gamma_{ij}}}\times\mathbb{Z}^c .$$
Since $G/L$ is a quotient group of $G/N$, it follows that  $a\le b,\, l_i\le n_i$ and $\alpha_{ij}\le\beta_{ij}$ for all $i,1\le i\le d$ and for all $j,1\le j\le l_i$. 
We begin with the following lemma.

\begin{lm}  Let $G,L,M$ and $N$ be as above. Then $\Hom(G/N, M)\simeq G/L$ if and only if one of the following conditions hold:
\begin{enumerate}
\item[$(i)$] $G$ is torsion-free, $M$ is infinite cyclic and both $G/L$ and $G/N$ are torsion-free of same rank.
\item[$(ii)$] $G$ is torsion, $M\simeq C_{\prod_{i=1}^{d}{p_{i}^{\gamma_{i1}}}}\times\prod_{i=d+1}^e Z_{q_i}$, $l_{i}=n_{i}$ and either $\alpha_{ij}=\beta_{ij}\le\gamma_{i1}$ for each $j$ or $\alpha_{ij}=\gamma_{i1}$ for $ 1\le j\leq r_{i}$ and $\alpha_{ij}=\beta_{ij}$ for $r_{i}+1\leq j\leq l_{i}$, where $r_{i}$ is the largest positive integer between $1$ and $ l_{i}$ such that $\beta_{ir_{i}}>\gamma_{i1}$ for each fixed $i, 1\le i \le d$.
\item[$(iii)$] $G$ is a mixed group,  $M\simeq C_{\prod_{i=1}^{d}{p_{i}^{\gamma_{i1}}}}\times\prod_{i=d+1}^e Z_{q_i} \times \mathbb{Z}^c$, both $G/L$ and $G/N$ are finite, $l_{i}=n_{i}$ and either $\alpha_{ij}=\beta_{ij}\le\gamma_{i1}$ for each $j$ or $\alpha_{ij}=\gamma_{i1}$ for $ 1\le j\leq r_{i}$ and $\alpha_{ij}=\beta_{ij}$ for $r_{i}+1\leq j\leq l_{i}$, where $r_{i}$ is the largest positive integer between $1$ and $ l_{i}$ such that $\beta_{ir_{i}}>\gamma_{i1}$ for each fixed $i, 1\le i \le d$.
\end{enumerate}
\end{lm}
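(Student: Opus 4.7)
The plan is to reduce the condition $\Hom(G/N,M)\simeq G/L$ to an equality of primary decompositions of finitely generated abelian groups and then read off the three listed cases. Using bilinearity of $\Hom$ on the decompositions $G/N\simeq Y\times\mathbb{Z}^{b}$ and $M\simeq Z\times\mathbb{Z}^{c}$, and invoking Lemma 1.1 to kill $\Hom(Y,\mathbb{Z}^{c})$ (torsion into torsion-free) and to convert $\Hom(\mathbb{Z}^{b},Z)$ into $Z^{b}$ and $\Hom(\mathbb{Z}^{b},\mathbb{Z}^{c})$ into $\mathbb{Z}^{bc}$, I obtain
$$\Hom(G/N,M)\simeq \Hom(Y,Z)\times Z^{b}\times\mathbb{Z}^{bc}.$$
Decomposing $Y$ and $Z$ into $p$-primary components and using the standard identities $\Hom(C_{p^{\alpha}},C_{q^{\beta}})=1$ for $p\ne q$ and $\Hom(C_{p^{\alpha}},C_{p^{\beta}})\simeq C_{p^{\min(\alpha,\beta)}}$, $\Hom(Y,Z)$ becomes an explicit product of cyclic $p_{i}$-groups, namely $\prod_{i,j,k}C_{p_{i}^{\min(\beta_{ij},\gamma_{ik})}}$, indexed over the common primes of $Y$ and $Z$.

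For the forward direction I would compare the resulting expression with $G/L\simeq X\times\mathbb{Z}^{a}$ and invoke uniqueness of primary decomposition. Equality of torsion-free ranks gives $a=bc$, which combined with $a\le b$ (coming from $N\le L$) forces $c\le 1$ whenever $b>0$. For each $i$ with $p_{i}\in\pi(X)$ one must also have $p_{i}\in\pi(M)$, otherwise the left side carries no $p_{i}$-torsion; after relabeling $q_{i}=p_{i}$ for $1\le i\le d$, the $p_{i}$-primary match reads $X_{p_{i}}\simeq \Hom(Y_{p_{i}},Z_{p_{i}})\times Z_{p_{i}}^{b}$. The crucial counting step is this: the right side has $m_{i}(n_{i}+b)$ cyclic factors while the left has $l_{i}\le n_{i}$, and since $l_{i}\ge 1$ forces $m_{i}\ge 1$, the only possible balance is $m_{i}=1$ and $b=0$. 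In particular $Z_{p_{i}}\simeq C_{p_{i}^{\gamma_{i1}}}$ is cyclic and both $G/L$, $G/N$ are finite as soon as $d>0$. Matching the now-diagonal invariants in $\prod_{j}C_{p_{i}^{\min(\beta_{ij},\gamma_{i1})}}\simeq\prod_{j}C_{p_{i}^{\alpha_{ij}}}$ then gives $\alpha_{ij}=\min(\beta_{ij},\gamma_{i1})$, and tracking where the constant $\gamma_{i1}$ sits in the decreasing sequence $\beta_{i1}\ge\beta_{i2}\ge\cdots$ yields the stated dichotomy, with $r_{i}$ the largest index where $\beta_{ir_{i}}>\gamma_{i1}$.

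The three cases then fall out by the nature of $G$. If $G$ is torsion-free, then $M\subseteq G$ is torsion-free, $Z=0$, and the iso collapses to $\mathbb{Z}^{bc}\simeq X\times\mathbb{Z}^{a}$; this forces $X=0$, hence $d=0$ and $Y=0$ by the hypothesis $\pi(G/L)=\pi(G/N)$, and finally $a=bc$ together with $a\le b$ yields $c=1$ and $a=b$, which is (i). If $G$ is torsion, then $M$, $G/L$, $G/N$ are all finite so $a=b=c=0$, and the primary matching delivers (ii) directly, with the extra primes $q_{d+1},\ldots,q_{e}$ in $\pi(M)$ unconstrained because they contribute nothing to $\Hom(Y,Z)$. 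If $G$ is mixed with $d>0$, the counting argument forces $a=b=0$ while $c$ is arbitrary, producing (iii). In each case the converse is a direct substitution back into $\Hom(G/N,M)\simeq \Hom(Y,Z)\times Z^{b}\times\mathbb{Z}^{bc}$.

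The main obstacle I anticipate is the tight book-keeping in the $p$-primary counting, specifically turning the raw count $l_{i}=m_{i}(n_{i}+b)$ together with the quotient inequality $l_{i}\le n_{i}$ into the sharp conclusion that $b=0$ and $Z_{p_{i}}$ is cyclic as soon as $d>0$, and then correctly reading off the split $\alpha_{ij}=\gamma_{i1}$ for $j\le r_{i}$, $\alpha_{ij}=\beta_{ij}$ for $j>r_{i}$ from the monotonicity of the $\beta_{ij}$. Once that local $p$-primary calculation is in hand, uniqueness of the primary decomposition handles the rest of the bookkeeping automatically.
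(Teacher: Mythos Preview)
Your proposal is correct and follows essentially the same route as the paper's proof: expand $\Hom(G/N,M)$ via the primary decompositions, compare elementary divisors with those of $G/L$, and use the count of cyclic factors together with $l_i\le n_i$ to force $m_i=1$ (and, in the mixed case, $b=0$), after which the identity $\alpha_{ij}=\min(\beta_{ij},\gamma_{i1})$ yields the stated dichotomy. The only organisational difference is that you write out the global formula $\Hom(G/N,M)\simeq \Hom(Y,Z)\times Z^{b}\times\mathbb{Z}^{bc}$ once and then specialise, while the paper specialises first and treats case~(iii) by saying ``similar arguments''; your explicit count $l_i=m_i(n_i+b)$ is exactly what makes those ``similar arguments'' go through, and your explicit appeal to the hypothesis $\pi(G/L)=\pi(G/N)$ to obtain $Y=1$ in case~(i) is a step the paper leaves implicit.
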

\begin{proof} It is easy to see that if any of the three conditions hold, then $\Hom(G/N, M)\simeq G/L$. Conversely suppose that $\Hom(G/N, M)\simeq G/L$. Then
\begin{equation}
\Hom(Y\times \mathbb{Z}^b, Z\times \mathbb{Z}^c)\simeq X\times \mathbb{Z}^a.
\end{equation}
We prove only (i) and (ii), because (iii) can be proved using similar arguments.
First assume that $G$ is torsion-free. Then $N$ is also torsion-free and therefore by (1) $\Hom(Y\times \mathbb{Z}^b, \mathbb{Z}^c)\simeq X\times \mathbb{Z}^a$. Thus $X=1$ and since $a\le b$, $c=1$ and $a=b$. It follows that $M$ is infinite cyclic and both $G/N$ and $G/L$ are torsion-free of same rank. Next assume that $G$ is torsion. Then $\Hom(Y,Z)\simeq X$ by (1). Since $\pi(X)=\pi(Y)$ and $d(X)\le d(Y)$, therefore $q_i=p_i$ and $m_i=1$ for all $i, 1\le i\le d$. Thus $M\simeq \prod_{i=1}^{d}C_{p_{i}^{\gamma_{i1}}}\times
\prod_{i=d+1}^e\prod_{j=1}^{m_{i}}C_{q_{i}^{\gamma_{ij}}}$.
 Also, observe that \[\begin{array}{lcl}
\Hom(Y,Z)&\simeq&\Hom(\displaystyle\prod_{i=1}^d\displaystyle\prod_{j=1}^{n_{i}}
C_{p_{i}^{\beta_{ij}}}, \displaystyle \displaystyle{\prod_{i=1}^{d}C_{p_{i}^{\gamma_{i1}}}}\times\displaystyle
\prod_{i=d+1}^e\prod_{j=1}^{m_{i}}C_{q_{i}^{\gamma_{ij}}})\\
&\simeq&\Hom(\displaystyle\prod_{i=1}^{d}\displaystyle\prod_{j=1}^{n_{i}}
C_{p_{i}^{\beta_{ij}}}, \displaystyle\prod_{i=1}^{d}C_{p_{i}^{\gamma_{i1}}})\\
&\simeq&\displaystyle\prod_{i=1}^d\Hom(\displaystyle\prod_{j=1}^{n_{i}}
C_{p_{i}^{\beta_{ij}}}, C_{p_{i}^{\gamma_{i1}}})
\end{array}\]
and $X\simeq\prod_{i=1}^d\prod_{j=1}^{l_{i}}
C_{p_{i}^{\alpha_{ij}}}$. Therefore
$\Hom(\prod_{j=1}^{n_{i}}
C_{p_{i}^{\beta_{ij}}}, C_{p_{i}^{\gamma_{i1}}})\simeq \prod_{j=1}^{l_{i}}C_{p_{i}^{\alpha_{ij}}}$
for each $i,1\le i\le d$, and hence $l_i=n_i$. It thus follows that for each fixed $i, 1\le i\le d$,
\begin{equation}
\Hom(\displaystyle\prod_{j=1}^{l_{i}}
C_{p_{i}^{\beta_{ij}}}, C_{p_{i}^{\gamma_{i1}}})\simeq \displaystyle\prod_{j=1}^{l_{i}}
C_{p_{i}^{\alpha_{ij}}}.
\end{equation}
Now, if $\exp(Y_{p_{i}})\le \exp(Z_{p_{i}})$, then $\beta_{ij}\le\gamma_{i1}$ for each $j$ and  
$\Hom(\prod_{j=1}^{l_{i}}
C_{p_{i}^{\beta_{ij}}}, C_{p_{i}^{\gamma_{i1}}})\simeq \prod_{j=1}^{l_{i}}C_{p_{i}^{\beta_{ij}}}$. It therefore follows from (2) that $\alpha_{ij}=\beta_{ij}$ for each $j$. And, if $\exp(Y_{p_{i}})>\exp(Z_{p_{i}})$, then there exists largest positive integer $r_{i}$ between $1$ and $l_i$ such that $ \beta_{ir_{i}}>\gamma_{i1}$ and $\beta_{ij}\le \gamma_{i1}$ for each $j,r_i+1\le j\le l_i$. Therefore 
$\Hom(\prod_{j=1}^{l_{i}}
C_{p_{i}^{\beta_{ij}}}, C_{p_{i}^{\gamma_{i1}}})\simeq \prod_{j=1}^{r_{i}}C_{p_{i}^{\gamma_{i1}}}\times \prod_{j=r_i+1}^{l_{i}}
C_{p_{i}^{\beta_{ij}}}$. It then follows by (2) that $\alpha_{ij}=\gamma_{i1}$ for $1\le j\le r_i$ and $\alpha_{ij}=\beta_{ij}$ for $r_i+1\le j\le l_i$.
\end{proof}

\begin{rem} {\em  Observe that if $N=L$ and $\exp(G/N)|\exp(M)$, then  $\exp(Y_{p_{i}})\le \exp(Z_{p_{i}})$ for all $i$ and hence $\Hom(G/L, M)\simeq G/L$ if and only if either $M$ is infinite cyclic or  $M\simeq C_{\prod_{i=1}^{d}{p_{i}^{\gamma_{i1}}}}\times\prod_{i=d+1}^e Z_{q_i} \times \mathbb{Z}^c$, where  $c\geq 0$ is the torsion-free rank of $M$.}
\end{rem}

The next lemma is a little modification of arguments of Alperin \cite[Lemma 3]{alp} and Fournelle \cite[Section 2]{fou}.

\begin{lm} \label{Lemma1}
Let $G$ be any group and $Y$ be a central subgroup of $G$ contained in a normal subgroup $X$ of $G$. Then the group of all automorphisms of $G$ that induce the identity on both $X$ and $G/Y$ is isomorphic to
$\Hom(G/X,Y)$. 
\end{lm}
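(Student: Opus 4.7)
The plan is to exhibit an explicit isomorphism by the standard ``commutator'' trick. Denote by $A$ the group of automorphisms $\phi$ of $G$ that restrict to the identity on $X$ and induce the identity on $G/Y$. For $\phi\in A$ and $g\in G$, the condition $\phi(g)Y=gY$ gives $g^{-1}\phi(g)\in Y$, so I can define $f_\phi:G\to Y$ by $f_\phi(g)=g^{-1}\phi(g)$. Using that $Y$ is central in $G$, a short computation
$$f_\phi(gh)=h^{-1}g^{-1}\phi(g)\phi(h)=h^{-1}f_\phi(g)\phi(h)=f_\phi(g)h^{-1}\phi(h)=f_\phi(g)f_\phi(h)$$
shows $f_\phi\in\Hom(G,Y)$. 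Since $\phi|_X=\mathrm{id}_X$, $f_\phi$ kills $X$, so it descends to $\bar f_\phi\in\Hom(G/X,Y)$.

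Conversely, given $\psi\in\Hom(G/X,Y)$, I would set $\phi_\psi(g)=g\,\psi(gX)$. Centrality of $Y$ again makes $\phi_\psi$ a homomorphism: $\phi_\psi(gh)=gh\,\psi(ghX)=gh\,\psi(gX)\psi(hX)=g\psi(gX)\,h\psi(hX)=\phi_\psi(g)\phi_\psi(h)$. It is clear that $\phi_\psi$ restricts to the identity on $X$ (because $\psi(xX)=\psi(X)=1$ for $x\in X$) and induces the identity on $G/Y$ (because $\psi(gX)\in Y$). Bijectivity of $\phi_\psi$ follows by noting that if $\psi^{-1}$ denotes the pointwise inverse in the abelian group $\Hom(G/X,Y)$, then $\phi_\psi\circ\phi_{\psi^{-1}}(g)=g\psi^{-1}(gX)\psi(g\psi^{-1}(gX)X)=g\psi^{-1}(gX)\psi(gX)=g$, so $\phi_\psi\in A$.

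Finally, I would check that the assignments $\phi\mapsto\bar f_\phi$ and $\psi\mapsto\phi_\psi$ are mutually inverse, which is immediate from the formulas, and that $\phi\mapsto\bar f_\phi$ is a group homomorphism. The key point for the latter is to compute, for $\phi_1,\phi_2\in A$,
$$f_{\phi_1\phi_2}(g)=g^{-1}\phi_1(\phi_2(g))=g^{-1}\phi_1\!\bigl(g\,f_{\phi_2}(g)\bigr)=g^{-1}\phi_1(g)\,f_{\phi_2}(g)=f_{\phi_1}(g)f_{\phi_2}(g),$$
where the middle equality uses $f_{\phi_2}(g)\in Y\subseteq X$ and $\phi_1|_X=\mathrm{id}_X$. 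There is no genuine obstacle here; the only place one must be careful is to invoke centrality of $Y$ at exactly the right moments (to get a homomorphism out of a crossed-homomorphism-looking formula) and to use $Y\subseteq X$ when verifying that composition of automorphisms corresponds to addition in $\Hom(G/X,Y)$.
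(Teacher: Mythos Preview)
Your argument is correct and complete; every step checks out, including the careful use of $Y\subseteq X$ when showing the map is a group homomorphism. The paper itself gives no proof of this lemma---it merely attributes the argument to Alperin and Fournelle---and what you have written is precisely the standard construction found in those references, so there is nothing further to compare.
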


Observe  that $C^*\simeq\Hom(G/Z(G), Z(G))$ by Lemma 2.3. If $G$ is nilpotent of class 2, then $\exp(G')=\exp(G/Z(G))$. Now taking $L=M=N=Z(G)$ in Lemma 2.1, we get the following main result of Azhdari and Malayeri \cite[Theorem 0.1]{azh2011} (see \cite[Theorem 2.3]{azh2013a} for correct version).

\begin{cor}
Let $G$ be a finitely generated nilpotent group of class $2$. Then  $C^*\simeq\Inn(G)$ if and only if either $Z(G)$ is infinite cyclic or $Z(G)\simeq C_{\prod_{i=1}^{d}{p_{i}^{\gamma_{i1}}}}\times\prod_{i=d+1}^e Z_{q_i} \times \mathbb{Z}^c$, where $c$ is the torsion-free rank of $Z(G)$.
\end{cor}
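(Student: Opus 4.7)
My plan is to reduce the statement to a direct application of Lemma 2.1, via Remark 2.2, with the substitution $L = N = M = Z(G)$. First I would rewrite both sides of the proposed isomorphism as statements about abelian groups. The right side is immediate: $\Inn(G) \simeq G/Z(G)$. For the left side, I would invoke Lemma 2.3 with $X = Y = Z(G)$; since $Z(G)$ is a central normal subgroup of $G$ contained in itself, the lemma gives
\[ C^* = \Aut_{Z(G)}^{Z(G)}(G) \simeq \Hom(G/Z(G), Z(G)). \]
Thus the corollary reduces to the claim that $\Hom(G/Z(G), Z(G)) \simeq G/Z(G)$ holds precisely when $Z(G)$ has one of the two stated forms.

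Next I would verify the hypotheses needed for Remark 2.2 in the specialization $L = N = M = Z(G)$. The structural hypothesis $G' \le N \le L$ of Lemma 2.1 becomes $G' \le Z(G)$, which holds because $G$ is nilpotent of class $2$; the abelianness of $M$ is the abelianness of $Z(G)$. For Remark 2.2 one additionally needs $\exp(G/N) \mid \exp(M)$, i.e., $\exp(G/Z(G)) \mid \exp(Z(G))$. I would obtain this by combining the class-$2$ identity $\exp(G') = \exp(G/Z(G))$ (noted in the paragraph preceding the corollary) with the containment $G' \le Z(G)$, which forces $\exp(G') \mid \exp(Z(G))$.

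With these hypotheses in hand, Remark 2.2 applied to $G/L = G/Z(G)$ and $M = Z(G)$ directly yields: $\Hom(G/Z(G), Z(G)) \simeq G/Z(G)$ if and only if either $Z(G)$ is infinite cyclic, or
\[ Z(G) \simeq C_{\prod_{i=1}^{d} p_i^{\gamma_{i1}}} \times \prod_{i=d+1}^{e} Z_{q_i} \times \mathbb{Z}^{c}, \]
where $c$ is the torsion-free rank of $Z(G)$. Combined with the equivalences established in the first paragraph, this is exactly the statement of the corollary.

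I do not anticipate any real obstacle: the substantive work lives in Lemma 2.1, and the corollary is essentially a translation. The one bookkeeping point worth highlighting is the divisibility $\exp(G/Z(G)) \mid \exp(Z(G))$, which is what permits the clean appeal to Remark 2.2 rather than requiring a case split through the two subcases of Lemma 2.1(ii)/(iii).
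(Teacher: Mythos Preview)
Your proposal is correct and follows precisely the paper's own argument: reduce $C^*\simeq\Inn(G)$ to $\Hom(G/Z(G),Z(G))\simeq G/Z(G)$ via Lemma~2.3, then invoke Lemma~2.1 (through Remark~2.2) with $L=N=M=Z(G)$, using the class-$2$ identity $\exp(G')=\exp(G/Z(G))$ together with $G'\le Z(G)$ to obtain the needed divisibility $\exp(G/Z(G))\mid\exp(Z(G))$.
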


\begin{cor} [{\cite[Corollary 2.1]{azh2013a}}]
Let $G$ be a finitely generated non-abelian group and let $M$ and $N$ be normal subgroups of $G$ such that $M\le Z(G)\le N$ and $G/Z(G)$ is finite. Then $\mathrm{Aut}^M_{N}(G)=\Inn(G)$ if and only if $G$ is a nilpotent group of class $2$, $N=Z(G)$, $G^\prime\leq M$ and $M\simeq C_{\prod_{i=1}^{d}{p_{i}^{\gamma_{i1}}}}\times\prod_{i=d+1}^e Z_{q_i} \times \mathbb{Z}^c$, where  $c\geq 0$ is the torsion-free rank of $M$.
\end{cor}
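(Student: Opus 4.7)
The plan is to combine Lemma 2.3 with Lemma 2.1 applied at the specialization $L = N = Z(G)$. Since $M \le Z(G)$ is central and $M \le N$, Lemma 2.3 with $X = N$ and $Y = M$ yields $\Aut_N^M(G) \simeq \Hom(G/N, M)$, which is the central identification on which both directions of the corollary hinge.

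For necessity, assume $\Aut_N^M(G) = \Inn(G)$. From $\Inn(G) \subseteq \Aut_N^M(G)$ I read off two conditions: every inner automorphism fixes $N$ pointwise (so $N \le Z(G)$, and together with the hypothesis $N \ge Z(G)$ this gives $N = Z(G)$) and every inner automorphism acts trivially on $G/M$ (so $G' \le M$). Since $G' \le M \le Z(G)$ and $G$ is non-abelian, $G$ is nilpotent of class exactly $2$. With $N = Z(G)$ the above isomorphism becomes $\Hom(G/Z(G), M) \simeq \Inn(G) \simeq G/Z(G)$, and Lemma 2.1 applies with $L = N = Z(G)$; the hypotheses $G' \le N$ and $\pi(G/L) = \pi(G/N)$ are immediate, and finiteness of $G/Z(G)$ forces $a = b = 0$. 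Its case (ii) (if $G$ is torsion) or case (iii) (if $G$ is mixed) forces $M$ to have the claimed form.

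For sufficiency, starting from the stated data I first check that $\Inn(G) \subseteq \Aut_N^M(G)$ by reversing the two implications above: $N = Z(G)$ gives that inner automorphisms fix $N$ elementwise, and $G' \le M$ gives that they act trivially on $G/M$. Lemma 2.3 then identifies $\Aut_N^M(G) \simeq \Hom(G/Z(G), M)$, which is finite; and Lemma 2.1 (equivalently Remark 2.2) applied with $L = N = Z(G)$ and $M$ in the prescribed form yields $|\Hom(G/Z(G), M)| = |G/Z(G)| = |\Inn(G)|$, upgrading the inclusion to an equality.

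The step I expect to be the most delicate is the bookkeeping in Lemma 2.1 when $L = N$: one must observe that this specialization forces $a = b$, $l_i = n_i$, and $\alpha_{ij} = \beta_{ij}$, so that the two sub-cases of Lemma 2.1 collapse to a single constraint on the shape of $M$, together with the implicit divisibility $\exp((G/Z(G))_{p_i}) \mid p_i^{\gamma_{i1}}$ reflected in Remark 2.2. Once this translation is carried out, the remainder of the proof is routine.
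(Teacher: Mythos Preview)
Your proof is correct and follows essentially the same route as the paper: identify $\Aut_N^M(G)\simeq\Hom(G/N,M)$ via Lemma~2.3, extract $N=Z(G)$, $G'\le M$, and class~$2$ from the inclusion $\Inn(G)\subseteq\Aut_N^M(G)$, and then apply Lemma~2.1 with $L=N=Z(G)$, where the divisibility $\exp(G/Z(G))=\exp(G')\mid\exp(M)$ (your final paragraph, the paper's Remark~2.2) collapses the sub-cases. The only cosmetic difference is that the paper infers class~$2$ from $\Inn(G)$ being abelian (as a $\Hom$ group) rather than from $G'\le M\le Z(G)$, and leaves the converse as ``follows easily'' whereas you spell out the inclusion-plus-cardinality argument.
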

\begin{proof} First suppose that  $\mathrm{Aut}^M_{N}(G)=\Inn(G)$. Observe that $\mathrm{Aut}^M_{N}(G)\simeq \Hom(G/N, M)$ by Lemma 2.3. It follows that $\Inn(G)$ is abelian and therefore nilpotence class of $G$ is $2$. For any $[a, b]\in G^\prime$,  $[a,b]=a^{-1}I_b(a)\in M$ and thus $G^\prime\le M$. Also, for any $n\in N$, $I_x(n)=n$ for all $x\in G$ and therefore $N=Z(G)$. Now since $\exp(G/Z(G))=\exp(G')$ divides $\exp(M)$, the result follows from Lemma 2.1 by taking $L=Z(G)$. The converse follows easily.
\end{proof}

In 1911, Burnside \cite[Note B, p. 463]{bur} gave the notion of pointwise inner automorphism of a group $G$. An automorphism $\alpha$ of $G$ is called pointwise inner automorphism of $G$ if $x$ and $\alpha(x)$ are conjugate for each $x\in G$. Let $H$ be a characteristic subgroup of $G$. As defined in \cite{azh}, an automorphism $\alpha$ of $G$ is called $H$-pointwise inner if for each element $x\in G$, there exists $h\in H$ such that $\alpha(x) = x^h = x[x, h].$ For convenience, we denote $\gamma_{k}(G)$-pointwise inner automorphism of $G$ by $\mathrm{Aut}_{k-pwi}(G)$. As another application of Lemma 2.1, we get the following two results of Azhdari \cite{azh}.   
The second one generalizes  Theorem 2.2(i) of \cite{azh}.

\begin{cor} [{\cite[Prop. 1.11]{azh}}] Let $G$ be a finitely generated nilpotent group of class $k+1\geq 2$. Then $\Hom(G/\zeta_{k}(G),\gamma_{k+1}(G))\simeq G/\zeta_{k}(G)$ if and only if  $\gamma_{k+1}(G)$ is  cyclic. In particular, if $\gamma_{k+1}(G) = [x, \gamma_{k}(G)]$ for all $x\in G\setminus C_{G}(\gamma_{k}(G))$ is cyclic, then $\mathrm{Aut}_{k-pwi}(G)$ is isomorphic to a quotient group of $\Inn(G)$.
\end{cor}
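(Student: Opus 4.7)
The plan is to apply Lemma 2.1 with $L=N=\zeta_k(G)$ and $M=\gamma_{k+1}(G)$. All hypotheses of Lemma 2.1 are satisfied: $\gamma_{k+1}(G)\le Z(G)$ is an abelian subgroup, $G'\le\zeta_k(G)$ by the nilpotency class, and with $L=N$ the prime-set condition is automatic. The identity driving the analysis is $[\zeta_k(G),\gamma_k(G)]\le\zeta_0(G)=1$, which says in particular that $\zeta_k(G)\le C_G(\gamma_k(G))$ and that every commutator $[x,a]$ with $a\in\gamma_k(G)$ is central.

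From this I extract two arithmetic facts that collapse the three cases of Lemma 2.1 to the single condition ``$\gamma_{k+1}(G)$ cyclic''. First, centrality of $[x,a]$ gives $[x^m,a]=[x,a]^m$ while $x^m\in\zeta_k(G)$ kills $[x^m,a]$; hence $\exp(\gamma_{k+1}(G))$ divides $\exp(G/\zeta_k(G))$ and $\pi(\gamma_{k+1}(G))\subseteq\pi(G/\zeta_k(G))$. Second, when $\gamma_{k+1}(G)$ is cyclic, the Sylow decomposition of the finite nilpotent groups in play forces $\pi(G/\zeta_k(G))=\pi(\gamma_{k+1}(G))$ (a Sylow factor contributes to $G/\zeta_k(G)$ iff it has class exactly $k+1$ iff it contributes to $\gamma_{k+1}(G)$), and for each such prime $p$ some commutator $[x_p,a_p]$ realizes the full $p$-part $\exp(\gamma_{k+1}(G)_p)$, so since $\zeta_k(G)\le C_G(a_p)$ the order of $x_p$ in $G/\zeta_k(G)$ is at least that much, giving the prime-by-prime exponent equality $\exp((G/\zeta_k(G))_p)=\exp(\gamma_{k+1}(G)_p)$. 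With these two facts Lemma 2.1 yields the iff: in the forward direction the prime-containment eliminates every $Z_{q_i}$ with $q_i\notin\pi(G/\zeta_k(G))$, and finiteness of $G/\zeta_k(G)$ in the mixed case forces $\gamma_{k+1}(G)$ to be finite (it is generated by the images of the finitely many maps $x\mapsto[x,a]$ as $a$ ranges over a finite generating set of $\gamma_k(G)$, each factoring through $G/\zeta_k(G)$), killing its free rank; in every case $\gamma_{k+1}(G)$ is cyclic. The converse is then a direct plug-in to Lemma 2.1.

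For the ``in particular'' statement I use $\alpha\mapsto f_\alpha$, where $f_\alpha(x)=x^{-1}\alpha(x)=[x,h_x]$ for $\alpha\in\mathrm{Aut}_{k-pwi}(G)$: centrality of $\gamma_{k+1}(G)$ makes $f_\alpha$ a homomorphism, $[\zeta_k,\gamma_k]=1$ makes it factor through $G/\zeta_k(G)$, and $[x,\gamma_k(G)]=1$ on $C_G(\gamma_k(G))$ makes it descend to $G/C_G(\gamma_k(G))$; the map is clearly injective, and under the hypothesis $\gamma_{k+1}(G)=[x,\gamma_k(G)]$ for all $x\notin C_G(\gamma_k(G))$ every $f\in\Hom(G/C_G(\gamma_k(G)),\gamma_{k+1}(G))$ arises from the $k$-pointwise inner automorphism $x\mapsto xf(x)$, giving $\mathrm{Aut}_{k-pwi}(G)\simeq\Hom(G/C_G(\gamma_k(G)),\gamma_{k+1}(G))$. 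Applying Lemma 2.1 with $L=N=C_G(\gamma_k(G))$ (valid because $[G',\gamma_k(G)]\le\gamma_{k+2}(G)=1$), and repeating the same arithmetic as above with the hypothesis now forcing $\exp(G/C_G(\gamma_k(G)))=\exp(\gamma_{k+1}(G))$ directly (each $x\notin C_G(\gamma_k(G))$ has order there equal to $\exp([x,\gamma_k(G)])=\exp(\gamma_{k+1}(G))$), we get $\Hom(G/C_G(\gamma_k(G)),\gamma_{k+1}(G))\simeq G/C_G(\gamma_k(G))$. Since $Z(G)\le C_G(\gamma_k(G))$, this target is the quotient $\Inn(G)/(C_G(\gamma_k(G))/Z(G))$ of $\Inn(G)$. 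The main delicate step I anticipate is the Sylow-by-Sylow promotion of ``$\gamma_{k+1}(G)$ cyclic'' to both prime and exponent equalities with $G/\zeta_k(G)$, together with the controlled vanishing of the torsion-free rank of $\gamma_{k+1}(G)$ in Lemma 2.1's mixed case.
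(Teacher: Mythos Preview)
Your overall strategy---take $L=N=\zeta_k(G)$, $M=\gamma_{k+1}(G)$ in Lemma~2.1 and reduce everything to arithmetic on exponents and prime sets---is exactly the paper's approach. The paper, however, does not rederive the needed arithmetic: it simply cites Warfield for the two facts
\[
\exp\bigl(G/\zeta_k(G)\bigr)=\exp\bigl(\gamma_{k+1}(G)\bigr)
\quad\text{and}\quad
G/\zeta_k(G)\ \text{finite} \iff \gamma_{k+1}(G)\ \text{finite},
\]
and then invokes Remark~2.2. Your attempt to prove these by hand is where the gap lies.

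Both of your ``arithmetic facts'' point the same way. The first gives $\exp(\gamma_{k+1}(G))\mid\exp(G/\zeta_k(G))$. The second says that some $x_p$ has order at least $\exp(\gamma_{k+1}(G)_p)$ in $G/\zeta_k(G)$, which again yields only $\exp(\gamma_{k+1}(G)_p)\le\exp\bigl((G/\zeta_k(G))_p\bigr)$. You then assert ``prime-by-prime exponent equality'', but you have not argued the reverse inequality $\exp\bigl((G/\zeta_k(G))_p\bigr)\le\exp(\gamma_{k+1}(G)_p)$ anywhere. That reverse inequality is precisely what is needed for the converse: with $L=N$ one has $\alpha_{ij}=\beta_{ij}$, so the ``or'' branch in Lemma~2.1(ii)--(iii) is vacuous and one must verify $\alpha_{ij}\le\gamma_{i1}$, i.e.\ $\exp\bigl((G/\zeta_k(G))_p\bigr)\le\exp(\gamma_{k+1}(G)_p)$. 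Likewise, when $\gamma_{k+1}(G)$ is infinite cyclic you need $G/\zeta_k(G)$ torsion-free, and your commutator argument only shows the torsion part lies in $C_G(\gamma_k(G))/\zeta_k(G)$, not that it vanishes. These are the nontrivial Warfield facts, and they do not follow from the commutator identities you wrote down.

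For the ``in particular'' clause your route via $\Hom\bigl(G/C_G(\gamma_k(G)),\gamma_{k+1}(G)\bigr)$ is a legitimate variant of the paper's (which instead identifies $\mathrm{Aut}_{k\text{-}pwi}(G)$ with $\Hom(G/\zeta_k(G),\gamma_{k+1}(G))$ following Yadav and then quotes the first part). Your surjectivity argument there is cleaner, but the same exponent issue recurs when you again invoke Lemma~2.1 at the end.
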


\begin{proof} It follows from \cite[Cor. 2.6, Cor. 3.16, Cor. 3.17]{war} that $\exp(G/\zeta_{k}(G))=\exp(\gamma_{k+1}(G))$ and $G/\zeta_{k}(G)$ is finite if and only if $\gamma_{k+1}(G)$ finite. The result now follows from Lemma 2.1 (see Remark 2.2) by taking $L=N=\zeta_{k}(G)$ and $M=\gamma_{k+1}(G)$.
In particular, if $\gamma_{k+1}(G) = [x, \gamma_{k}(G)]$ for all $x\in G\setminus C_{G}(\gamma_{k}(G))$ is cyclic, then using the arguments as in \cite[Prop. 3.1]{yad}, we can prove that $\mathrm{Aut}_{k-pwi}(G)\simeq\Hom(G/\zeta_{k}(G),\gamma_{k+1}(G))$.
\end{proof}

\begin{cor} [{\em cf.} {\cite[Theorem 2.2(i)]{azh}}] Let $G$ be a finitely generated nilpotent group of class $k+1\geq 2$. Then $\Hom(G/\zeta_{k}(G),\gamma_{k+1}(G))\simeq \Inn(G)$ if and only if $G$ is nilpotent of class $2$ and $G^\prime$ is cyclic. In particular, if $\gamma_{k+1}(G) = [x, \gamma_{k}(G)]$ for all $x\in G\setminus C_{G}(\gamma_{k}(G))$, then $\Aut_{k-pwi}(G)\simeq\Inn(G)$ if and only if $G$ is nilpotent of class $2$ and $G^\prime$ is cyclic.
\end{cor}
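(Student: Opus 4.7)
The first move is to exploit that $\Hom$ into any abelian group is abelian: the isomorphism $\Hom(G/\zeta_{k}(G), \gamma_{k+1}(G)) \simeq \Inn(G)$ forces $\Inn(G) \simeq G/Z(G)$ to be abelian, so $G$ has nilpotency class at most $2$. Together with the hypothesis that the class equals $k+1 \geq 2$, this pins down $k = 1$, so $\zeta_{k}(G) = Z(G)$ and $\gamma_{k+1}(G) = G'$. The first biconditional therefore reduces to showing that, for a finitely generated class-$2$ group $G$, $\Hom(G/Z(G), G') \simeq G/Z(G)$ if and only if $G'$ is cyclic.

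For this reduced problem I would apply Lemma 2.1 with $L = N = Z(G)$ and $M = G'$. Since $\exp(G/Z(G)) = \exp(G')$ in a class-$2$ group, Remark 2.2 tells me the isomorphism is equivalent to $G'$ being infinite cyclic or $G' \simeq C_{\prod_{i=1}^{d} p_{i}^{\gamma_{i1}}} \times \prod_{i=d+1}^{e} Z_{q_{i}} \times \mathbb{Z}^{c}$. It then remains to argue that the ``extra'' factors vanish. The exponent equality forces the torsion parts of $G'$ and $G/Z(G)$ to share the same primes, so $e = d$ and the $\prod_{i>d} Z_{q_{i}}$ block disappears. For the $\mathbb{Z}^{c}$ block I would split by the cases of Lemma 2.1: case (i) gives $G' = \mathbb{Z}$ directly; case (ii) has $G$ torsion, so $c = 0$; case (iii) requires $G/Z(G)$ finite, and then the class-$2$ identity $[a^{m}, b] = [a, b]^{m}$ with $m = |G/Z(G)|$ makes every commutator torsion of order dividing $m$, forcing the finitely generated abelian group $G'$ to be finite and hence $c = 0$. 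In every case $G'$ is cyclic, and the converse direction is an immediate application of Remark 2.2.

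The ``In particular'' half is then bookkeeping on top of the first part. Under the extra hypothesis $\gamma_{k+1}(G) = [x, \gamma_{k}(G)]$ for all $x \in G \setminus C_{G}(\gamma_{k}(G))$, the argument of Proposition 3.1 of \cite{yad} (already invoked at the end of Corollary 2.6) produces an isomorphism $\Aut_{k-pwi}(G) \simeq \Hom(G/\zeta_{k}(G), \gamma_{k+1}(G))$, and the claimed biconditional for $\Aut_{k-pwi}(G) \simeq \Inn(G)$ follows from the first part. The main obstacle I anticipate is the $c = 0$ step in case (iii): one has to verify that the ``mixed'' branch of Lemma 2.1 collapses for class-$2$ groups, and the commutator identity is the essential tool to do so.
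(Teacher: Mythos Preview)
Your proposal is correct and follows essentially the same route as the paper: you use abelianness of $\Hom$ to force class~$2$ and $k=1$, then reduce to the criterion $\Hom(G/Z(G),G')\simeq G/Z(G)$, which the paper handles by citing Corollary~2.6 while you inline that corollary's proof via Lemma~2.1 and Remark~2.2. Your extra care in killing the $\prod_{i>d}Z_{q_i}$ and $\mathbb{Z}^c$ factors (using $\exp(G/Z(G))=\exp(G')$ and the commutator identity $[a^m,b]=[a,b]^m$) supplies detail the paper leaves to the Warfield reference, but the argument is the same in substance.
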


\begin{proof} Observe that if $\Hom(G/\zeta_{k}(G),\gamma_{k+1}(G))\simeq \Inn(G)$, then $G/Z(G)$ is abelian, and therefore nilpotence class of $G$ is $2$. It follows that $\zeta_{k}(G)=Z(G)$ and $\gamma_{k+1}(G)=G^\prime$. The result now follows from above corollary by taking $k=1$.
\end{proof}

For $g\in G$ and $\alpha\in\Aut(G)$, the element $[g,\alpha]=g^{-1}\alpha(g)$ is called the autocommutator of $g$ and $\alpha$. Inductively, define 
$$[g,\alpha_1,\alpha_2,\ldots ,\alpha_n]=[[g,\alpha_1,\alpha_2,\ldots ,\alpha_{n-1}],\alpha_n],$$
where $\alpha_i\in\Aut(G)$. The absolute center $L(G)$ of $G$ is defined as
$$L(G)=\{g\in G\,|\,[g,\alpha]=1, \;\mbox{for all}\; \alpha\in \Aut(G)\}.$$ 
Let $L_1(G)=L(G)$, and for $n\ge 2$, define $L_n(G)$ inductively as 
$$L_n(G)=\{g\in G\,|\, [g,\alpha_1,\alpha_2,\ldots ,\alpha_n]=1 \;\mbox{for all}\; \alpha_1,\alpha_2,\ldots ,\alpha_n\in\Aut(G)\}.$$ 
The autocommutator subgroup $G^*$ of $G$ is defined as
$$G^*=\langle g^{-1}\alpha(g)\,|\,g\in G,\alpha\in \Aut(G)\rangle.$$
It is easy to see that  
$L_n(G)\le Z_n(G)$ for all $n\ge 1$ and $G'\le G^*$.  An automorphism $\alpha$ of $G$ is called an autocentral automorphism if $g^{-1}\alpha(g)\in L(G)$ for all $g\in G$. The group of all autocentral automorphisms of $G$ is denoted by $\mathrm{Var}(G).$ A group $G$ is called autonilpotent of class at most $n$ if $L_{n}(G) = G$ for some natural number $n$. Observe that if $G$ is autonilpotent of class 2, then $G^*\le L(G)$. Nasrabadi and Farimani \cite{nas} proved that if $G$ is a finie autonilpotent $p$-group of class 2, then $\mathrm{Var}(G)=\Inn(G)$ if and only if $L(G)=Z(G)$ and $Z(G)$ is cyclic. Observe that $\mathrm{Var}(G)\simeq \Hom(G/L(G), L(G))$ by Lemma 2.3. As a final consequence of Lemma 2.1, we get the following result which generalizes  the main result of Nasrabadi and Farimani. The proof follows from Lemma 2.1 by taking $M=N=L(G)$ and $L=Z(G)$.

\begin{cor}  Let $G$ be a finitely generated non-abelian group such that  $G^{\prime}\le L(G)$ and  $\pi(G/L(G))=\pi(G/Z(G))$. Then $\mathrm{Var}(G)\simeq \Inn(G)$ if and only if one of the following holds 
\begin{enumerate}
\item[$(i)$] $G$ is torsion-free, $L(G)$ is infinite cyclic and $\rho(G/L(G))=\rho(G/Z(G))$;
\item[$(ii)$] $G$ is torsion, $L(G)\simeq C_{\prod_{i=1}^{d}{p_{i}^{\gamma_{i1}}}}\times\prod_{i=d+1}^e Z_{q_i}$ and either $L(G)=Z(G)$ or $l_{i}=n_{i}$, $\alpha_{ij}=\gamma_{i1}$ for $ 1\le j\leq r_{i}$ and $\alpha_{ij}=\beta_{ij}$ for $r_{i}+1\leq j\leq l_{i}$, where $r_{i}$ is the largest positive integer between $1$ and $ l_{i}$ such that $\beta_{ir_{i}}>\gamma_{i1}$ for each fixed $i, 1\le i \le d$.
\item[$(iii)$] $G$ is a mixed group, both $G/L(G)$ and $G/Z(G)$ are finite, $L(G)\simeq C_{\prod_{i=1}^{d}{p_{i}^{\gamma_{i1}}}}\times\prod_{i=d+1}^eZ_{q_i} \times \mathbb{Z}^c$ and either $L(G)=Z(G)$ or $l_{i}=n_{i}$, $\alpha_{ij}=\gamma_{i1}$ for $ 1\le j\leq r_{i}$ and $\alpha_{ij}=\beta_{ij}$ for $r_{i}+1\leq j\leq l_{i}$, where $r_{i}$ is the largest positive integer between $1$ and $ l_{i}$ such that $\beta_{ir_{i}}>\gamma_{i1}$ for each fixed $i, 1\le i \le d$.
\end{enumerate}
\end{cor}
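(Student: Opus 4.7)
The plan is to deduce the statement from Lemma 2.1 with the substitution $M=N=L(G)$, $L=Z(G)$, combined with the $\Hom$-description of $\mathrm{Var}(G)$ provided by Lemma 2.3.

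First I would rewrite the target isomorphism. Since every automorphism of $G$ fixes $L(G)$ pointwise (this being the defining property of the absolute center), an autocentral automorphism is precisely an element of $\Aut^{L(G)}_{L(G)}(G)$, and Lemma 2.3 applied with $X=Y=L(G)$ yields $\mathrm{Var}(G)\simeq\Hom(G/L(G),L(G))$. Since $\Inn(G)\simeq G/Z(G)$, the required isomorphism $\mathrm{Var}(G)\simeq\Inn(G)$ becomes
\[
\Hom(G/L(G),L(G))\simeq G/Z(G),
\]
which is exactly the conclusion of Lemma 2.1 under the substitution $M=N=L(G)$, $L=Z(G)$. I would then verify the standing hypotheses of Lemma 2.1 under this substitution: $L(G)\le Z(G)$ together with the assumption $G'\le L(G)$ gives $G'\le N\le L$; the equality $\pi(G/L)=\pi(G/N)$ is the hypothesis $\pi(G/Z(G))=\pi(G/L(G))$; and $M=L(G)$ is abelian.

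Cases (i)--(iii) of the corollary now read off directly from cases (i)--(iii) of Lemma 2.1 under the identifications $a=\rho(G/Z(G))$, $b=\rho(G/L(G))$, $c=\rho(L(G))$, with $X$, $Y$, $Z$ the torsion parts of $G/Z(G)$, $G/L(G)$, $L(G)$ respectively. Case (i) is then a literal transcription.

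The only non-mechanical point, and the main obstacle, is to collapse the first subcase of Lemma 2.1(ii) and 2.1(iii) --- namely $l_i=n_i$ with $\alpha_{ij}=\beta_{ij}$ for all $i,j$ --- into the single clause ``$L(G)=Z(G)$'' appearing in the corollary. I would argue as follows: in case (ii) the group $G$ is torsion, so $G/Z(G)=X$ and $G/L(G)=Y$ are both finite abelian, while in case (iii) both are finite by hypothesis. In either case the numerical equalities above yield $X\simeq Y$, hence $|G/L(G)|=|G/Z(G)|$, and together with $L(G)\le Z(G)$ this forces $L(G)=Z(G)$. Conversely, $L(G)=Z(G)$ trivially produces these equalities. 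This completes the translation; the sufficiency in each case is already built into the ``if'' direction of Lemma 2.1.
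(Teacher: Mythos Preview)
Your proposal is correct and is precisely the paper's approach: the paper's entire proof is the sentence ``follows from Lemma 2.1 by taking $M=N=L(G)$ and $L=Z(G)$'' together with the prior observation $\mathrm{Var}(G)\simeq\Hom(G/L(G),L(G))$ from Lemma 2.3, and you reproduce exactly this with the hypotheses checked. Your explicit justification of why the first subcase of Lemma 2.1(ii)/(iii) collapses to ``$L(G)=Z(G)$'' goes beyond what the paper spells out; the one small point you skip in the converse is that $L(G)=Z(G)$ also forces $\alpha_{ij}\le\gamma_{i1}$ (needed to land in the first subcase of Lemma 2.1), which holds because $G'\le L(G)=Z(G)$ makes $G$ of class~$2$, whence $\exp\bigl((G/Z(G))_{p_i}\bigr)=\exp\bigl((G')_{p_i}\bigr)\le\exp\bigl(Z(G)_{p_i}\bigr)=p_i^{\gamma_{i1}}$.
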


Let $G$ be a finite $p$-group such that $G^\prime \le L(G)$. Let
$G/Z(G)\simeq\prod_{i=1}^r C_{p^{\alpha_i}}$,
$G/L(G)\simeq\prod_{i=1}^s C_{p^{\beta_j}}$ and $L(G)\simeq \prod_{i=1}^t C_{p^{\gamma_i}}$, where  $\alpha_1\geq\alpha_2\geq\ldots\geq\alpha_r$, $\beta_1\geq\beta_2\geq\ldots\geq\beta_s$ and  $\gamma_1\geq \gamma_2\geq\ldots\geq \gamma_t $ are positive integers. Since $G/Z(G)$ is a quotient group of $G/L(G)$, $r\le s$ and $\alpha_i\le \beta_i$ for $1\le i\le r$.

 \begin{cor}
Let $G$ be a finite non-abelian p-group. Then $\mathrm{Var}(G)=\Inn(G)$ if and only if $G^{\prime}\le L(G) $,  $L(G)$ is cyclic and either  $L(G)=Z(G)$ or $d(G/L(G))=d(G/Z(G))$, $\alpha_i=\gamma_1$ for $1\le i\le k$ and $\alpha_i =\beta_i$ for $k+1\le i\le r$, where $k$ is the largest positive integer such that $\beta_k>\gamma_1.$ 
\end{cor}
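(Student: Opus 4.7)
The plan is to deduce this statement from Corollary 2.8 by specializing to the single-prime setting, after two small preparatory observations. First, the hypothesis $G^\prime\le L(G)$ required by Corollary 2.8 must be derived from the equality $\mathrm{Var}(G)=\Inn(G)$; second, the abstract isomorphism produced by Corollary 2.8 must be upgraded to a set-theoretic equality.

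For the forward direction, assume $\mathrm{Var}(G)=\Inn(G)$. The inclusion $\Inn(G)\subseteq\mathrm{Var}(G)$ forces $[x,g]=x^{-1}I_g(x)\in L(G)$ for all $x,g\in G$, hence $G^\prime\le L(G)$. Because $G$ is a non-abelian finite $p$-group, both $G/Z(G)$ and $G/L(G)$ are nontrivial finite $p$-groups (using $L(G)\le Z(G)\lneq G$), so $\pi(G/L(G))=\pi(G/Z(G))=\{p\}$. Corollary 2.8 now applies, and since $G$ is torsion, case (ii) is the relevant one. Specializing its conclusion to the single prime $p$ (so $d=e=1$ and the product over $i=d+1,\ldots,e$ is empty) reproduces exactly the structural conditions of Corollary 2.9: $L(G)$ is cyclic, and either $L(G)=Z(G)$ or $d(G/L(G))=d(G/Z(G))$ together with the stated equalities among $\alpha_i,\beta_i$ and $\gamma_1$.

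For the converse, the stated conditions permit an application of Lemma 2.1 with $L=Z(G)$ and $M=N=L(G)$, yielding $\Hom(G/L(G),L(G))\simeq G/Z(G)$; Lemma 2.3 then gives $\mathrm{Var}(G)\simeq\Inn(G)$. The hypothesis $G^\prime\le L(G)$ also produces the inclusion $\Inn(G)\subseteq\mathrm{Var}(G)$ directly (each $I_g$ is autocentral), and comparing finite orders concludes $\mathrm{Var}(G)=\Inn(G)$.

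The main obstacle I foresee is purely notational: translating case (ii) of Corollary 2.8 into the single-prime invariants $\alpha_i,\beta_i,\gamma_i$ of Corollary 2.9 demands careful bookkeeping, but no substantive new idea beyond Lemma 2.1 is required.
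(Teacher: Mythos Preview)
Your proposal is correct and follows essentially the same route as the paper: derive $G'\le L(G)$ from the inclusion $\Inn(G)\subseteq\mathrm{Var}(G)$, then invoke Corollary~2.8 specialized to a single prime. Your added care in verifying $\pi(G/L(G))=\pi(G/Z(G))=\{p\}$ and in upgrading the isomorphism $\mathrm{Var}(G)\simeq\Inn(G)$ of Corollary~2.8 to the required equality via the containment $\Inn(G)\subseteq\mathrm{Var}(G)$ and a cardinality comparison is a point the paper's two-line proof leaves implicit.
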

 
\begin{proof} Observe that if $\mathrm{Var}(G)=\Inn(G)$, then for any $[a, b]\in G^\prime$, $[a,b]=a^{-1}I_b(a)\in L(G)$ and thus $G^\prime\le L(G)$. The result now follows from Cor. 2.8.
\end{proof}

\begin{cor} [{\cite[Theorem 3.2]{nas}}] Let $G$ be a non-abelian autonilpotent finite $p$-group of class $2$. Then $\mathrm{Var}(G)=\Inn(G)$ if and only if $L(G)=Z(G)$ and $L(G)$ is cyclic.
\end{cor}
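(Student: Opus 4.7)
The plan is to read Corollary 2.10 as a quick specialization of Corollary 2.9, the key observation being that the autonilpotence hypothesis controls all of $\Aut(G)$ and not just the autocommutator subgroup. Concretely, $G$ being autonilpotent of class $2$ means $L_2(G)=G$, i.e., $[g,\alpha]\in L(G)$ for every $g\in G$ and every $\alpha\in\Aut(G)$; this is exactly the statement that every automorphism of $G$ is autocentral, so $\Aut(G)=\mathrm{Var}(G)$. In particular $G^{\prime}\le G^{*}\le L(G)$, which is the standing hypothesis needed to invoke Corollary 2.9.

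For the forward implication, I would combine $\mathrm{Var}(G)=\Inn(G)$ with $\Aut(G)=\mathrm{Var}(G)$ to obtain $\Aut(G)=\Inn(G)$. Inner automorphisms fix $Z(G)$ elementwise, so now every automorphism of $G$ fixes $Z(G)$ elementwise; by the definition of the absolute center this gives $Z(G)\subseteq L(G)$, and together with the trivial inclusion $L(G)\subseteq Z(G)$ this forces $L(G)=Z(G)$. Corollary 2.9 then forces $L(G)$ to be cyclic, giving both required conclusions. For the converse, $L(G)=Z(G)$ cyclic together with nilpotence of class $2$ (which gives $G^{\prime}\le Z(G)=L(G)$) places us directly in the ``$L(G)=Z(G)$'' alternative of Corollary 2.9, which returns $\mathrm{Var}(G)=\Inn(G)$.

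The one step that requires a moment of thought, and the only one I would single out as the main ``obstacle,'' is the identification of autonilpotence of class $2$ with the equality $\Aut(G)=\mathrm{Var}(G)$; after that, the whole argument is a three-line reduction to Corollary 2.9. The inclusion $\Inn(G)\le\mathrm{Var}(G)$ used in the converse is also worth a passing remark, but it is automatic from $[g,x]\in G^{\prime}\le L(G)$, and again relies on the same identification.
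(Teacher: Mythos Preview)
Your argument is correct and is a genuinely different---and cleaner---route than the paper's. The key identification $\Aut(G)=\mathrm{Var}(G)$ coming from $L_2(G)=G$ is exactly right: $g\in L_2(G)$ means $[g,\alpha]\in L(G)$ for every $\alpha\in\Aut(G)$, so every automorphism is autocentral. From $\Aut(G)=\Inn(G)$ you then get $Z(G)\subseteq L(G)$ immediately, and Corollary~2.9 supplies cyclicity of $L(G)$.

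The paper does \emph{not} exploit the equality $\Aut(G)=\mathrm{Var}(G)$. It only uses the weaker consequence $G^{*}\le L(G)$ of autonilpotence of class~2, and then runs an exponent argument: it shows $\exp(G/L(G))=\exp(G^{*})$ (via the identity $(g^{-1}\alpha(g))^{m}=g^{-m}\alpha(g)^{m}$, valid because $g^{-1}\alpha(g)\in L(G)$), whence $\exp(G/L(G))$ divides $\exp(L(G))$. Combined with the cyclicity of $L(G)$ from Corollary~2.9, this gives $\Hom(G/L(G),L(G))\simeq G/L(G)$, and comparing with $\mathrm{Var}(G)\simeq G/Z(G)$ forces $|G/L(G)|=|G/Z(G)|$, hence $L(G)=Z(G)$.

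Your route sidesteps the exponent computation entirely and is shorter; the paper's route, on the other hand, would survive under hypotheses weaker than full autonilpotence of class~2 (one only needs $G^{*}\le L(G)$, not $\Aut(G)=\mathrm{Var}(G)$). One cosmetic remark: in your converse you appeal to ``nilpotence of class~2'' to get $G'\le Z(G)$; that is fine since $L_2(G)\le Z_2(G)$ forces $G$ to be nilpotent of class~$2$, but you could equally well cite your own first paragraph, where $G'\le G^{*}\le L(G)$ already gives $G'\le L(G)$ without the detour through $Z(G)$.
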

\begin{proof}
Suppose that $\mathrm{Var}(G)=\Inn(G)$. Observe that if $g^{-1}\alpha(g)\in G^*$, then $\alpha(g)=gl$ for some $l\in L(G)$ and hence $(g^{-1}\alpha(g))^{m}=g^{-m}\alpha(g)^{m}$ for all $m\ge 1$. Let $\exp(G/L(G))=d$ and $\exp(G^*)=k$. Then $1=(g^{-1}\alpha(g))^k=g^{-k}\alpha(g)^{k}$ implies that $g^k\in L(G)$ and hence $d\le k$. Conversely, if $gL(G)\in G/L(G)$, then $g^d\in L(G)$ and thus $1=g^{-d}\alpha(g^d)=(g^{-1}\alpha(g))^d$. It follows that $k\le d$ and hence $\exp(G/L(G))=\exp(G^*)$. Since $G^*\le L(G)$, $\exp(G/L(G))|\exp(L(G))$. 
Therefore $\mathrm{Var}(G)\simeq\Hom(G/L(G),L(G))\simeq G/L(G)$, because $L(G)$ is cyclic by Corollary 2.9, and hence $L(G)=Z(G)$.
\end{proof}

\end{document}